\newtheorem{theorem}{Theorem}[section]
\newtheorem{lemma}[theorem]{Lemma} 
\newtheorem{proposition}[theorem]{Proposition}
\newtheorem{corollary}[theorem]{Corollary}
\numberwithin{equation}{section}
\def\Q{{\mathbb {Q}}}
\def\C{{\mathbb {C}}}
\def\Z{{\mathbb Z}}  
\def\R{{\mathbb R}} 
\def\eps{{\varepsilon}}
\def\ts{{s'}} 
\def\tS{{S'}}
\def\eff{{\rm eff}}
\def\ineff{{\rm ineff}}
\def\resp{{\rm resp., }} 
\def\vv{{\rm v}} 
\def\pp{{\mathfrak p}}
\def\aa{{\mathfrak a}} 
\def\bb{{\mathfrak b}}
\def\beq{\begin{equation}}
\def\eeq{\end{equation}}
\def\cP{{\mathcal P}}
\begin{document}


\vskip 5mm

\title[On the difference between squares and integral $S$-units]
{On the difference between squares and integral $S$-units}

\author{Yann Bugeaud}
\address{I.R.M.A., UMR 7501, Universit\'e de Strasbourg
et CNRS, 7 rue Ren\'e Descartes, 67084 Strasbourg Cedex, France}
\address{Institut universitaire de France}
\email{bugeaud@math.unistra.fr}

\dedicatory{To Jan-Hendrik Evertse on his retirement}

\begin{abstract}
Let $q_1, \ldots , q_t$ be distinct prime numbers. 
Let $a_1, \ldots , a_t$ be nonnegative integers and $x$ a positive integer. 
We establish an effective lower bound for the greatest prime divisor of 
$|x^2 -  q_1^{a_1} \ldots q_t^{a_t}|$, which tends to infinity with 
the maximum of $x$, $a_1, \ldots , a_t$. 
\end{abstract}

\subjclass[2010]{11D61, 11J86}
\keywords{exponential Diophantine equations, linear forms in logarithms}

\maketitle

\section{Introduction}\label{sec:1}

In his seminal paper \cite{Schi67},  Schinzel established fully explicit versions of two 
theorems of Gelfond \cite{Gel39,Gel60} on the Archimedean and 
the $p$-adic distances between integral powers of algebraic numbers (nowadays commonly referred to as linear forms in two logarithms, in the 
Archimedean and in the $p$-adic settings). He further gave many applications of his estimates, including the 
following result \cite[Theorem 9]{Schi67}.


\begin{theorem}[Schinzel, 1967] \label{SchiTh}
Let $m_1, \ldots, m_k$ be positive integers. If $x$ and $\ell_1, \ldots , \ell_k$ are positive integers
and $x^2 - m_1^{\ell_1} \ldots m_k^{\ell_k}$ is nonzero, then 
\begin{equation} \label{SchiMinor} 
| x^2 - m_1^{\ell_1} \ldots m_k^{\ell_k} | > \exp \bigl( c \, (\log \max \{x^2, m_1^{\ell_1} \ldots m_k^{\ell_k} \} )^{1/7} \bigr),
\end{equation} 
where $c$ is a positive effectively computable number depending only on $m_1, \ldots , m_k$. 
\end{theorem}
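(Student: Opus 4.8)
The plan is to recast the problem as a lower bound for a single linear form in logarithms, working in the real quadratic field $\Q(\sqrt R)$, where $R$ is the squarefree part of $N=m_1^{\ell_1}\cdots m_k^{\ell_k}$.

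Write $N=m_1^{\ell_1}\cdots m_k^{\ell_k}$, $A:=x^2-N\ne0$, $M:=\max\{x^2,N\}$, and let $S$ be the (fixed) set of prime divisors of $m_1\cdots m_k$. We may assume $M$ exceeds any prescribed effective bound. If $|A|\ge M^{1/3}$ the asserted inequality is immediate, so assume $|A|<M^{1/3}$; then $x^2$ and $N$ differ by a factor $1+o(1)$, so $\log N\asymp\log M$, and $M$ is large exactly when $x$ and the $\ell_i$ are. Write $N=RG^2$ with $R$ squarefree: then $R$ divides $m_1\cdots m_k$ up to a square, so $R$ lies in a finite set depending only on the $m_i$, $G=\prod_{p\in S}p^{g_p}$ is an $S$-unit, and $G\sqrt R=\sqrt N$. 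If $R=1$ then $A=(x-G)(x+G)$ with $x\ne G$, so $|A|\ge x+G\gg\sqrt M$ and we are done; assume henceforth $R\ge2$.

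Work in $K=\Q(\sqrt R)$ with fundamental unit $\epsilon_R>1$; as $R$ ranges over a finite set, $\epsilon_R$ and the class number and discriminant of $K$ are bounded in terms of the $m_i$. The algebraic integer $\delta:=x-\sqrt N\in\mathcal O_K$ has norm $x^2-RG^2=A$; it is tiny in the identity embedding, $|\delta|=|A|/(x+\sqrt N)\ll M^{-1/6}$, while its conjugate satisfies $|\overline\delta|=x+\sqrt N\asymp 2\sqrt N$. Choose $j\ge1$ with $1\le\bigl|\epsilon_R^{\,j}\delta\bigr|<\epsilon_R$ and set $\mu:=\epsilon_R^{\,j}\delta$; then $\mu$ is an algebraic integer of norm $\pm A$, its conjugate has $|\overline\mu|=|A|/|\mu|\le|A|$, its (logarithmic Weil) height is at most $\tfrac12\log|A|+O(1)$, and $j=\log(1/|\delta|)/\log\epsilon_R+O(1)\asymp\log M$. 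Taking $\log|\cdot|$ in $\delta=\epsilon_R^{-j}\mu$, writing $x+\sqrt N=2\sqrt N\,\bigl(1+\delta/(2\sqrt N)\bigr)$ and $\log(2\sqrt N)=\log2+\tfrac12\log R+\sum_{p\in S}g_p\log p$, and cancelling the terms $\log|A|=\log|\mu|+\log|\overline\mu|$, one obtains the identity
\[
\Lambda\;:=\;-j\log\epsilon_R-\log|\overline\mu|+\log2+\tfrac12\log R+\sum_{p\in S}g_p\log p\;=\;-\log\Bigl(1+\frac{\delta}{2\sqrt N}\Bigr),
\]
so that $0<|\Lambda|\le 2\,|\delta|/(2\sqrt N)=|\delta|/\sqrt N\le|A|/N$; the form is nonzero because $\Lambda=0$ would force $\delta=0$, i.e. $A=0$.

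Now $\Lambda$ is a linear form in the logarithms of $\epsilon_R,R,2$ and the primes $p\in S$ — positive algebraic numbers of height bounded in terms of the $m_i$ — and of $|\overline\mu|$, of height $\ll\log|A|$, with integer coefficients all of size $O(\log M)$. An effective lower bound for linear forms in logarithms gives $|\Lambda|>\exp(-V)$ with $V\ll(\log|A|+1)^{\nu}\log\log M$, the exponent $\nu$ depending only on the estimate used. Combined with $|\Lambda|\le|A|/N$ and $\log N\asymp\log M$ this yields
\[
\log M\;\ll\;\log|A|+V\;\ll\;(\log|A|+1)^{\nu}\log\log M,
\]
and hence $|A|>\exp\!\bigl(c\,(\log M)^{1/(\nu+1)}\bigr)$ for $M$ large, as we may assume. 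Schinzel's estimates for linear forms in several logarithms give $\nu=6$, which is exactly the source of the exponent $1/7$; with Matveev's estimate one may take $\nu=1$, and the argument then yields the much stronger $|A|>\exp\bigl(c\log M/\log\log M\bigr)$.

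The heart of the matter — and the only genuinely hard step — is this quantitative linear-forms-in-logarithms estimate: one needs the heights of the fixed numbers $\epsilon_R,R,2,p\in S$ and of the varying number $\overline\mu$ to enter only polynomially, and the (large) coefficients $j,g_p$ only through their logarithms. Everything else — the reduction to $|A|$ small, the splitting $N=RG^2$ and the trivial case $R=1$, the normalisation of $\mu$ by a power of $\epsilon_R$ with the ensuing bounds on $h(\mu)$ and $j$, the passage between $\mathcal O_K$ and $\Z[\sqrt R]$ and the signs coming from $N_{K/\Q}(\epsilon_R)=\pm1$, the non-vanishing of $\Lambda$, and the verification that all constants are effective and depend only on $m_1,\dots,m_k$ (which they do, since $R$, $\epsilon_R$ and $S$ do) — is routine.
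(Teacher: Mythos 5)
Your argument is correct and is essentially the method of this paper: the paper only quotes Theorem \ref{SchiTh} from Schinzel's article, but its own proof of the effective part of Theorem \ref{main} in Subsection \ref{ssec:3.1} --- extracting the squarefree part $Q_1$ of the smooth number, disposing of the case $Q_1=1$ by factoring $x^2-Q_2^2$, and otherwise passing to $\Q(\sqrt{Q_1})$, normalising $\alpha = x-\sqrt{Q_1}\,Q_2$ by a power of the fundamental unit via Lemma \ref{petitehauteur}, and feeding $\alpha/\sigma(\alpha)-1$ into Theorem \ref{lflog} --- is exactly your reduction, your $\Lambda$ being the logarithm of the quantity in \eqref{fllog2}. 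As you note, using Matveev's estimate in the final step gives the stronger lower bound $\exp(c\log M/\log\log M)$, consistent with the paper's remark that the right-hand side of \eqref{SchiMinor} can nowadays be replaced by a fixed positive power of $\max\{x^2, m_1^{\ell_1}\cdots m_k^{\ell_k}\}$.
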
 

By using more recent estimates for linear forms in two $p$-adic logarithms (established e.g. in \cite{BuLa96,Chim25}), the 
right hand side of \eqref{SchiMinor} can be replaced by a small positive power of 
$\max \{x^2, m_1^{\ell_1} \ldots m_k^{\ell_k} \} $, see \cite{BeBu12,Bu22}. 
Here, we go one step further, in the spirit of the papers \cite{GrVi13,BuEv17,BuEvGy18,Bu21}, where the authors bound from 
above the $S$-parts of various sequences of integers. 

Let $S = \{p_1, \ldots , p_s\}$ be a finite, non-empty set of distinct prime numbers.
For a nonzero integer $n$, write $n = b p_1^{a_1} \ldots p_s^{a_s}$, where 
$a_1, \ldots , a_s$ are non-negative integers and $b$ is an integer 
relatively prime to the product $p_1 \ldots p_s$. Then, we define the $S$-part $[n]_S$ 
of $n$ by 
$$
[n]_S := p_1^{a_1} \ldots p_s^{a_s}.
$$

In the sequel, 
we use the notation $\gg^{\eff}_{a, b, \ldots}$ and $\ll^{\eff}_{a, b, \ldots}$ (\resp $\gg^{\ineff}_{a, b, \ldots}$ 
and $\ll^{\ineff}_{a, b, \ldots}$) to indicate that the positive numerical constants implied by $\gg$ and $\ll$ 
can be given explicitly (\resp cannot be given explicitly from the known proofs) 
and depend at most on the parameters $a, b, \ldots$ 

Our main result is the following

\begin{theorem}  \label{main}
Let $S := \{ p_1, \ldots , p_s \}$ and $T := \{q_1, \ldots , q_t\}$ be disjoint sets of distinct prime numbers. 
For every nonzero integer $x$ coprime with $q_1 \ldots q_t$, every 
nonnegative integers $a_1, \ldots , a_t$, and every positive $\eps$, we have
$$
[x^2 - q_1^{a_1} \ldots q_t^{a_t}]_S \ll^{\ineff}_{S,T, \eps} |x^2 - q_1^{a_1} \ldots q_t^{a_t}|^{\frac{1}{2} + \eps} . 
$$
Furthermore, 
for every nonzero integer $x$ coprime with $q_1 \ldots q_t$ and every 
nonnegative integers $a_1, \ldots , a_t$, we have
\beq  \label{effecbound}
[x^2 - q_1^{a_1} \ldots q_t^{a_t}]_S \ll^{\eff}_{S,T}  |x^2 - q_1^{a_1} \ldots q_t^{a_t}|^{1 - \kappa},
\eeq
where 
\beq \label{P2}
\kappa = 
\bigl(c^s (\log \log P ) \bigl(  (\log p_1) \cdots (\log p_s) \bigr)^2 \bigr)^{-1} 
\eeq
and  $c$ is an effectively computable positive number  
depending only on $T$, and $P$ is the maximum of $p_1, \ldots , p_s$. 
\end{theorem}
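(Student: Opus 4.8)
The plan is to pass to the equivalent lower bound for the non-$S$-part of $N:=x^2-m$, where $m:=q_1^{a_1}\cdots q_t^{a_t}$. Write $N=[N]_S\cdot z$ with $[N]_S=p_1^{b_1}\cdots p_s^{b_s}$ and $\gcd(z,p_1\cdots p_s)=1$; since $[N]_S=|N|/|z|$, the first assertion amounts to $|z|\gg^{\ineff}_{S,T,\eps}|N|^{1/2-\eps}$ and \eqref{effecbound} to $|z|\gg^{\eff}_{S,T}|N|^{\kappa}$. One may assume throughout that $|N|$ --- equivalently, by the refinements of Theorem~\ref{SchiTh} in \cite{BeBu12,Bu22}, that $\max\{x^2,m\}$ --- is large, the bounded range being absorbed by the implied constants. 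Since $S\cap T=\varnothing$ no $p_i$ divides $m$; hence $b_i>0$ forces $p_i\nmid x$, and (for $p_i$ odd, the prime $2$ requiring only the usual minor modifications) forces $m$ to be a square modulo $p_i$, i.e. $p_i$ to split in $K:=\Q(\sqrt m)$; the case ``$m$ a perfect square'' is more elementary and I would dispose of it separately. As a first reduction, the Chinese Remainder Theorem gives $x\equiv\rho\pmod{[N]_S}$ for one of at most $2^s$ integers $\rho\in[0,[N]_S)$ with $\rho^2\equiv m$; writing $x=\rho+[N]_S w$ and substituting into $N=[N]_S z$ yields
\beq\label{eq:cancel}
 z \;=\; [N]_S\,w^{2}+2\rho w+\frac{\rho^{2}-m}{[N]_S},
\eeq
and when $m\le[N]_S^{2}$ and $|w|\ge 3$ this already forces $|z|\ge[N]_S$, hence $[N]_S\le|N|^{1/2}$, which is stronger than both claims. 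It remains to handle (a) the strongly cancelling range $m>[N]_S^{2}$, and (b) the $\le 5\cdot2^{s}$ residue classes with $|w|\le 2$, in which $x$ --- hence $m=x^2-[N]_S z$ --- is heavily constrained, and where outside (a) one has $x\ll[N]_S$ and $m\ll[N]_S^{2}$.

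For the ineffective bound I would run the Subspace-Theorem argument used for $S$-parts of sequences in \cite{GrVi13,BuEv17,BuEvGy18}, now in $K=\Q(\sqrt m)$ and attached to the factorisation $x^{2}-m=(x-\sqrt m)(x+\sqrt m)$, over the places of $K$ above $S\cup\{\infty\}$. The $S$-smooth part of the ideal $(x-\sqrt m)$ has norm $[N]_S$, and the Subspace Theorem forbids this from being as large as $|N|^{1/2+\eps}$ unless the point $(x,1)$ lies in one of finitely many proper subspaces; the latter forces $x$ into a finite set, and for such $x$ the statement reduces --- via $x^{2}\equiv m\pmod{[N]_S}$ --- to a lower-dimensional instance (fewer primes in $T$, or a linear polynomial in the $T$-units), which one clears by descent. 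A point to watch is that $m$ varies, so one should use a form of the Subspace Theorem uniform over the family $\{X^{2}-mY^{2}\}$; this is standard. The outcome is $[N]_S\ll^{\ineff}_{S,T,\eps}|N|^{1/2+\eps}$, the exponent $1/2=1-1/2$ reflecting the degree of $x\mapsto x^{2}$.

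For the effective bound \eqref{effecbound} the instrument is $p$-adic linear forms in logarithms. From \cite{Bu22} one has $\max\{x^{2},m\}\ll^{\eff}_{T}|N|^{c_0(T)}$, so that $\log x\ll_{T}\log|N|$ and $\log\max\{2,a_{1},\dots,a_{t}\}\ll_{T}\log\log|N|$. For each $i$ with $b_i>0$, the number $x^{2}m^{-1}$ is a $p_i$-adic unit congruent to $1$ modulo $p_i^{b_i}$, so $b_i=v_{p_i}(x^{2}q_{1}^{-a_{1}}\cdots q_{t}^{-a_{t}}-1)$ is the $p_i$-adic valuation of a linear form in the $t+1$ logarithms of $x,q_{1},\dots,q_{t}$ with coefficients $2,-a_{1},\dots,-a_{t}$. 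Applying Yu's theorem --- or, after reducing to two logarithms when the shape of $m$ permits, the sharper estimates of \cite{BuLa96,Chim25} whose dependence on the residue characteristic is essentially $(\log p_i)^{2}$ --- and summing the resulting bounds for $b_i\log p_i$ over $i$ gives $\log[N]_S\le(1-\kappa)\log|N|$ with $\kappa$ of the form \eqref{P2}: the factor $(\log p_1\cdots\log p_s)^{2}$ comes from the residue characteristics, $\log\log P$ from the sizes of the coefficients $2,a_1,\dots,a_t$, and $c^{s}$ from combining the $s$ individual estimates with the constants in the linear-forms bounds. I expect the main obstacle to be the strongly cancelling regime (a): there $|N|$ is far smaller than $\max\{x^{2},m\}$, the trivial estimate $b_i\le\log|N|/\log p_i$ loses a factor $s$, and the linear-forms estimate alone loses a factor $\log\log|N|$; closing the gap then requires feeding in the precise archimedean information that $x$ is very close to $\sqrt m$ and, through \eqref{eq:cancel}, essentially pinned down by $m$ --- which is exactly where Schinzel's method, in its refined form, must do the work, and calibrating this interplay is what yields the stated shape of $\kappa$.
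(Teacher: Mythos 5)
Your overall architecture (lower-bound the non-$S$-cofactor $b=|N|/[N]_S$, Roth/Subspace for the ineffective part, linear forms in logarithms for the effective part) points in the right direction, but both halves rest on steps that fail as written. For the ineffective part, you propose to run the Subspace Theorem in $\Q(\sqrt m)$ against the target $\sqrt m$ and wave at a version ``uniform over the family $\{X^2-mY^2\}$''; this is not standard, since Roth/Subspace-type statements are not uniform in algebraic targets of unbounded height, and $\sqrt m$ has height $\asymp\frac12\log m$. The missing idea is to write $m=Q_1Q_2^2$ with $Q_1$ a product of a subset of $\{q_1,\dots,q_t\}$ and to compare $x/y$ (with $y=\prod q_i^{\lfloor a_i/2\rfloor}$) to the targets $\pm\sqrt{Q_1}$, which lie in a \emph{finite} set depending only on $T$; the paper then needs only the $p$-adic Thue--Siegel--Roth theorem (with the denominator $y$ supported on the fixed set $T$ absorbed via the $\beta_p=\infty$ device), no Subspace Theorem and no descent over exceptional subspaces.

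The effective part is where the gap is most serious. Your central mechanism is to bound $b_i=\vv_{p_i}(x^2q_1^{-a_1}\cdots q_t^{-a_t}-1)$ by Yu's theorem in the $t+1$ logarithms of $x,q_1,\dots,q_t$ and to sum. Yu's bound carries the factor $p_i^D$ in the residue characteristic, so this yields $b_i\ll_T p_i\log x$ and hence $\log[N]_S\ll_T\bigl(\sum_i p_i\log p_i\bigr)\log|N|$, an upper bound that \emph{exceeds} $\log|N|$ once $P$ is large; no saving of the form $1-\kappa$, let alone one in which $P$ enters only through $\log\log P$ and $(\log p_1\cdots\log p_s)^2$, can be extracted this way, and your attribution of the factor $\cP^2$ to ``residue characteristics'' reflects this misplacement. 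The paper's argument is structurally the reverse: it factors $x^2-m=\alpha\,\sigma(\alpha)$ with $\alpha=x-\sqrt{Q_1}\,Q_2$ in the field $\Q(\sqrt{Q_1})$ (one of finitely many fields), uses the class number and Lemma \ref{petitehauteur} to write $\alpha=\pm\eta^m\gamma\pi_1^{z_1}\cdots\pi_{\ts}^{z_{\ts}}$ with $h(\gamma)\ll\log_* b$ and $\pi_i$ generating $\pp_i^h$, and applies Theorem \ref{lflog} (if $x>Q_2^2$) or Theorem \ref{Yu} at the primes $q_j\in T$ (if $x\le Q_2^2$) to $\Lambda=\alpha/\sigma(\alpha)-1$. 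Thus the primes $p_i$ enter only through the heights $h(\pi_i)\ll\log p_i$ (there are at most $2s$ of them, whence $\cP^2$), the residue characteristic in Yu's bound is a $q_j$ and is absorbed into the $T$-constant, and the small quantity $b$ enters through $h(\gamma)$ together with the refined $\log_*\bigl(B\max h_*(\alpha_i)/h_*(\alpha_n)\bigr)$ term; one gets $\log X\ll c^s(\log_* b)\,\cP^2\log\log P$ and inverts. Without putting $b$ \emph{inside} the linear form and taking valuations at the primes of $T$ rather than of $S$, the stated shape \eqref{P2} of $\kappa$ is out of reach.
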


The first assertion of Theorem \ref{main} is sharp. 
Indeed, let
$T := \{q_1, \ldots , q_t\}$ be a set of distinct prime numbers. 
Then, there are finite sets of primes $S$ with 
the smallest prime in $S$ arbitrarily large,
and, for each one of these sets $S$, infinitely many $(t+1)$-uples $(x, a_1, \ldots , a_t)$ 
of nonnegative integers such that
$$
[x^2 - q_1^{a_1} \ldots q_t^{a_t}]_S \gg_{S, T} |x^2 - q_1^{a_1} \ldots q_t^{a_t}|^{\frac{1}{2}}.
$$
This follows from Hensel's lemma, in a similar way as \cite[Theorem 2.1, (ii)]{BuEvGy18}. 

It would be interesting to solve completely Diophantine equations of the form 
$x^2 - q_1^{a_1} q_2^{a_2} = p^{u}$ (resp., $x^2 - q^{a}  = p_1^{u_1}  p_2^{u_2}$), for 
three given distinct prime numbers $q_1, q_2, p$ (resp., $q, p_1, p_2$). 
The more general equation
$$
x^2 - q_1^{a_1} \ldots q_t^{a_t} = y^n,
$$
where $q_1, \ldots , q_t$ are fixed prime numbers, 
has been investigated in \cite{Bu97,BenSik23,BMJS23} when $t=1$.

By applying Theorem \ref{main} with the set $S$ composed of the first $s$ prime numbers, 
we derive a lower bound for the greatest prime factor of $|x^2 - q_1^{a_1} \ldots q_t^{a_t}|$. 
For a nonzero integer $n$, let $P[n]$ denote the greatest prime factor of $|n|$
with the convention that $P[\pm 1] = 1$. 
For any positive real number $x$, we set $\log_* x = \max\{1, \log x\}$.  

\begin{corollary}  \label{gpdiv} 
Let $T := \{q_1, \ldots , q_t\}$ be a set of distinct prime numbers. 
Let $x$ be a nonzero integer coprime with $q_1 \ldots q_t$ and set
$$
X := \max\{ x^2, q_1^{a_1} \ldots q_t^{a_t}, 3\}.
$$
Then, we have
$$
P[ x^2 - q_1^{a_1} \ldots q_t^{a_t} ] \gg^{\eff}_T \log_* \log X
\, {\log_* \log_* \log X \over \log_* \log _*\log_* \log X}.
$$
\end{corollary}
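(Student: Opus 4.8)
The plan is to apply the effective bound \eqref{effecbound} with $S$ taken to be the set of the first $s$ primes, where $s$ will be chosen as a slowly growing function of $X$, and then extract a contradiction unless $P := p_s$ — which by the prime number theorem is of size roughly $s \log s$ — is already reasonably large. The point is that if the greatest prime factor $P[N]$ of $N := x^2 - q_1^{a_1}\cdots q_t^{a_t}$ were small, say $P[N] \le p_s$, then $N$ would be composed entirely of primes in $S$, so $[N]_S = |N|$, and \eqref{effecbound} would force $|N| \ll^{\eff}_{S,T} |N|^{1-\kappa}$, i.e. $|N|^{\kappa} \ll^{\eff}_{S,T} 1$. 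This bounds $|N|$ — and hence $X$, since $|N| \ge X^{1/2}$ roughly — in terms of $s$ and the primes $p_1,\dots,p_s$ via $\kappa$.

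Concretely, I would first record the size of the relevant quantities. By Mertens/Chebyshev-type estimates, $p_i \ll i \log_* i$ and $\sum_{i\le s}\log p_i \ll p_s/\log p_s \ll s$, so $\bigl((\log p_1)\cdots(\log p_s)\bigr)^2 \le \exp\bigl(O(s)\bigr)$ and $\log\log P \ll \log s$; feeding these into \eqref{P2} gives $\kappa^{-1} \le \exp\bigl(O(s)\bigr)$ for an effective implied constant depending only on $T$ (via $c$). Taking logarithms in the inequality $|N|^{\kappa} \ll^{\eff}_{S,T} 1$, and noting that the implied constant in \eqref{effecbound}, which a priori depends on $S$ too, can itself be bounded by $\exp(\exp(O(s)))$ using the explicit shape of the linear-forms-in-logarithms input (or, more cheaply, absorbed since its logarithm is dominated by $\kappa^{-1}$ times $\log|N|$ once $|N|$ is large), one obtains
\beq \label{logX-bound}
\log X \ll \log |N| \ll^{\eff}_T \kappa^{-1} \le \exp\bigl( C s \bigr)
\eeq
for an effective constant $C = C(T)$. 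Hence $s \gg^{\eff}_T \log\log X$, and then $P = p_s \gg s \log s \gg^{\eff}_T \log\log X \cdot \log\log\log X$.

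The remaining issue is that this argument shows $P[N]$ is large only when we are free to choose $s$, i.e. we should choose $s$ as large as \eqref{logX-bound} permits: set $s := \lfloor \tfrac{1}{C}\log_*\log X \rfloor$ (using $\log_*$ to handle small $X$), so that $\exp(Cs) \le \log X$ and the contradiction in \eqref{logX-bound} is reached unless $P[N] > p_s$. Then $P[N] > p_s \gg s \log_* s$, and substituting the value of $s$ yields $P[N] \gg^{\eff}_T \log_*\log X \cdot \log_*\log_*\log X$. To reach the sharper stated bound with the extra division by $\log_*\log_*\log_*\log X$, I would be slightly more careful: the crude estimate $\bigl((\log p_1)\cdots(\log p_s)\bigr)^2 \le \exp(O(s))$ loses a bit, and a sharper bookkeeping of $\sum_{i \le s}\log\log p_i$ together with the $\log\log P \asymp \log s$ factor refines $\kappa^{-1}$ to roughly $\exp\bigl(O(s)\bigr)$ with a correction that, after inverting, produces the denominator $\log_*\log_*\log_*\log X$; equivalently one chooses $s$ to make $\kappa^{-1}\log|N| \asymp \log X$ exactly and reads off $s$, hence $P = p_s$, from that equation.

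I expect the main obstacle to be purely bookkeeping: tracking how the implied constant in \eqref{effecbound} — which depends on $S$, not just on $T$ — grows with $s$, and making sure its logarithm is genuinely negligible compared with $\kappa^{-1}\log|N|$, so that the final constants depend only on $T$. This is the step where one must either invoke the explicit form of the underlying estimates for linear forms in $p$-adic logarithms to bound that constant by a tower in $s$, or argue that for $|N|$ past an $s$-dependent threshold the $\kappa^{-1}\log|N|$ term dominates and below that threshold $X$ is bounded in terms of $s$ anyway; either way one must verify the chosen $s \asymp \log_*\log X$ is consistent.
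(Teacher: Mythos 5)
Your proposal follows essentially the same route as the paper: take $S$ to be the first $s$ primes, observe that if $P[N]\le p_s$ for $N=x^2-q_1^{a_1}\cdots q_t^{a_t}$ then the cofactor $b=|N|/[N]_S$ equals $1$, so \eqref{effecbound} bounds $\log X$ by $\kappa^{-1}$ up to constants depending only on $T$, and then invert via the Prime Number Theorem. Two remarks. First, your worry about the $S$-dependence of the implied constant in \eqref{effecbound} is disposed of by the proof of Theorem \ref{main} itself, which establishes the cleaner inequality $\max\{b,3\}\ge X^{\kappa}$; with $b=1$ this gives $X\le 3^{1/\kappa}=3^{c_{18}^{s}\cP^{2}\log\log P}$ with an absolute constant in front, which is exactly the form the paper uses, and it also lets one simply take $s=\pi(P[N])$ and read off the bound directly instead of setting up a contradiction with a pre-chosen $s$. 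Second, your estimate $\bigl((\log p_1)\cdots(\log p_s)\bigr)^2\le\exp(O(s))$ is false: its logarithm is $2\sum_{i\le s}\log\log p_i\sim 2s\log\log s$, so the correct bound is $\kappa^{-1}\le\exp(O(s\log\log s))$. You also have the direction of the correction backwards: the (false) bound $\exp(O(s))$ would yield the \emph{stronger} conclusion $P[N]\gg\log_*\log X\cdot\log_*\log_*\log_* X$, whereas the honest bookkeeping gives $s\gg\log_*\log X/\log_*\log_*\log_*\log_* X$ and hence precisely the \emph{weaker} statement of Corollary \ref{gpdiv}; the extra quadruple logarithm is a loss forced by $\sum_{i\le s}\log\log p_i$, not a refinement one works harder to obtain. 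Since you correctly identify that sum (together with $\log\log P\asymp\log s$) as the source of the denominator and arrive at the right final formula, this is a bookkeeping slip rather than a gap in the argument.
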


To obtain this lower bound instead of the weaker estimate $\gg^{\eff}_T \log_* \log X$, we argue as Gy\H ory and Yu in the 
proof of their \cite[Theorem 2]{GyYu06}, see also \cite{EvGy15} and the remark at the end of Section \ref{sec:3}.


For completeness, we state the following result, whose proof follows the same lines as that 
of Theorem \ref{main}. 

\begin{theorem}  \label{effbound}
Let $T := \{q_1, \ldots , q_t\}$ be a set of distinct prime numbers. 
Let $m$ be a nonzero integer. 
There exists an effectively computable, positive real number $c$, depending only on $T$, 
such that, for every $(t+1)$-uple $(x, a_1, \ldots , a_t)$ 
with 
$$
x^2 - q_1^{a_1} \ldots q_t^{a_t} = m, 
$$
we have 
$$
| x | \le |2m|^c. 
$$
\end{theorem}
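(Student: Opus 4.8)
\emph{Plan of proof.} I would adapt the argument of Theorem~\ref{main} so as to bound the $T$-part of $y^2-m$ --- rather than the $S$-part of $x^2-q_1^{a_1}\cdots q_t^{a_t}$ --- the fixed integer $m$ now playing the role of the subtracted $T$-unit, and then exploit the elementary remark that, for a solution $(x,a_1,\ldots,a_t)$, the integer $x^2-m$ consists entirely of primes of $T$, i.e. equals its own $T$-part. First I would dispose of the case $x^2\le 2|m|$, where $|x|\le|2m|$ and there is nothing to prove. Assuming $x^2>2|m|$, set $N:=x^2-m=q_1^{a_1}\cdots q_t^{a_t}$; then $\tfrac12x^2<N<2x^2$, so $\log N=2\log|x|+O(1)$ and it suffices to bound $N$ by a power of $|m|$ with exponent depending only on $T$. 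Note that $[N]_T=N$.

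The heart of the matter is the estimate
\beq\label{effvar}
[y^2-m]_T\ \ll^{\eff}_{T}\ |m|^{c_1}\,\bigl|y^2-m\bigr|^{\,1-\kappa}\qquad(y\in\Z,\ y^2\neq m),
\eeq
with effectively computable $c_1=c_1(T)>0$ and $\kappa=\kappa(T)\in(0,1)$, proved along the same lines as the effective part of Theorem~\ref{main}: for $q_i\nmid y$ one has $v_{q_i}(y^2-m)=v_{q_i}\!\bigl(m\,y^{-2}-1\bigr)$, the $q_i$-adic valuation of a linear form in the two logarithms $\log m$ and $\log y$ with the bounded coefficients $1$ and $2$, and one bounds the product $\prod_i q_i^{\,v_{q_i}(y^2-m)}$ by means of estimates for $p$-adic linear forms in logarithms, following the rearrangement of Gy\H{o}ry and Yu \cite{GyYu06} used to prove \eqref{effecbound}; the fixed integer $m$ then enters only through an algebraic number of logarithmic height $\ll\log|m|$, whence the factor $|m|^{c_1}$ (the at most $t$ primes $q_i\mid\gcd(y,m)$, the only ones dividing $y$ that contribute to $[y^2-m]_T$, being subsumed in $c_1$ by a routine reduction). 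Granting \eqref{effvar} and taking $y=x$: since $[x^2-m]_T=N=|x^2-m|$, we get $N\ll^{\eff}_T|m|^{c_1}N^{1-\kappa}$, hence $N^{\kappa}\ll^{\eff}_T|m|^{c_1}$, hence $N\ll^{\eff}_T|m|^{c_1/\kappa}$, and so $x^2=N+m\ll^{\eff}_T|m|^{c_1/\kappa}$, which yields $|x|\le|2m|^{c}$ for a suitable $c=c(T)$.

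The hard part is to make sure that the dependence on $m$ in \eqref{effvar} is only a \emph{power} of $|m|$: one must avoid introducing any quantity growing faster than a power of $|m|$, such as the class number, the regulator, or the fundamental unit of a number field depending on $m$. This is to be achieved by carrying the whole argument out over $\Q$ (or, where a quadratic field is convenient, only over a field $\Q(\sqrt{d})$ with $d\mid q_1\cdots q_t$, which depends on $T$ alone), so that $m$ occurs throughout with logarithmic height $O(\log|m|)$ and with bounded multiplicities; the final constant $c$ then comes out effectively computable in terms of $T$, exactly as in Theorem~\ref{main}.
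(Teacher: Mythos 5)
Your overall reduction is sound --- in the nontrivial range $x^2>2|m|$ the integer $N:=x^2-m=q_1^{a_1}\cdots q_t^{a_t}$ equals its own $T$-part, so it suffices to show $N\le|m|^{O_T(1)}$ --- but the mechanism you propose for your key estimate fails. You bound $\vv_{q_i}(y^2-m)=\vv_{q_i}(m y^{-2}-1)$ by Theorem \ref{Yu} applied to a linear form whose \emph{bases} are $m$ and $y$ and whose exponents are the bounded integers $1$ and $-2$. The height of the unknown $y$ then enters multiplicatively: the bound reads $\vv_{q_i}(y^2-m)\ll_T h_*(m)\,h_*(y)\,\log_*\bigl(3h_*(y)/h_*(m)\bigr)$, i.e. $\ll_T\log_*|m|\cdot\log_*|y|\cdot\log_*\log_*|y|$. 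Since $\log_*|y|\approx\tfrac12\log|y^2-m|$ and the constant in Yu's theorem is enormous, this is already weaker than the trivial bound $\vv_{q_i}(y^2-m)\le\log|y^2-m|/\log q_i$; summing over $i$ gives $\log[y^2-m]_T\ll_T\log_*|m|\cdot\log|y^2-m|\cdot\log_*\log_*|y|$, nowhere near $c_1\log|m|+(1-\kappa)\log|y^2-m|$, and no bound on $x$ results. (Note also that your intermediate estimate, being uniform in $y$ with constants depending only on $T$, contains the theorem as the special case $[y^2-m]_T=|y^2-m|$, so as a lemma it is at least as hard as what is to be proved.) The whole point of the method of Section \ref{ssec:3.1} is the opposite arrangement: the large unknown must occur as an \emph{exponent} of algebraic numbers of small height, so that it is felt only through the mild factor $\log_* B$ rather than through the product of the heights.

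To really follow the same lines as Theorem \ref{main}, write $q_1^{a_1}\cdots q_t^{a_t}=Q_1Q_2^2$ with $Q_1$ squarefree. If $Q_1=1$, then $(x-Q_2)(x+Q_2)=m$ exhibits $x-Q_2$ and $x+Q_2$ as complementary divisors of $m$, whence $|x|\le|m|$ at once. If $Q_1\ge2$, work in $K=\Q(\sqrt{Q_1})$, one of at most $2^t$ fields determined by $T$ alone, so that the class number, the regulator and the fundamental unit $\eta$ are $O_T(1)$. The algebraic integer $\alpha=x-\sqrt{Q_1}Q_2$ satisfies $\alpha\,\sigma(\alpha)=m$, so Lemma \ref{petitehauteur} gives $\alpha=\pm\eta^{n}\gamma$ with $h(\gamma)\ll_T\log_*|2m|$ and $|n|\,h(\eta)\le h(\alpha)+h(\gamma)\ll_T\log X$, where $X:=\max\{|x|,Q_2,2\}$. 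Then, as in \eqref{fllog2}, $\sigma(\alpha)/\alpha-1=\pm\eta^{-2n}\,\sigma(\gamma)/\gamma-1$ has $q_i$-adic valuation at least $\lfloor a_i/2\rfloor-\vv_{q_i}(m)$, while Theorem \ref{Yu} --- now with both heights $h_*(\eta)\ll_T1$ and $h_*(\gamma/\sigma(\gamma))\ll_T\log_*|2m|$ small and only the exponent $n$ large --- bounds it by $\ll_T\log_*|2m|\cdot\log_*\bigl(\log X/\log_*|2m|\bigr)$. Hence $\log X\ll\sum_i a_i\log q_i+\log_*|2m|\ll_T\log_*|2m|\cdot\log_*\bigl(\log X/\log_*|2m|\bigr)$, which forces $\log X\ll_T\log_*|2m|$, i.e. $|x|\le|2m|^{c}$. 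Your closing remark about working over $\Q(\sqrt d)$ with $d\mid q_1\cdots q_t$ points in exactly this direction, but it is incompatible with the two-logarithm form $my^{-2}-1$ you actually write down; the passage through $\alpha=x-\sqrt{Q_1}Q_2$ and its unit decomposition is not optional.
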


Let $S$, $T$, and $P$ be as in the statement of Theorem \ref{main}.  
Let $a_1, \ldots , a_t$ be nonnegative integers. 
For higher powers than squares, the analogue of the effective part of Theorem \ref{main} is a particular case 
of \cite[Theorem 2.5]{BuEvGy18}. Indeed, for an integer $d \ge 3$, we write 
$$
x^d -  q_1^{r_1} \ldots q_t^{r_t} \, \big(q_1^{\lfloor a_1 / d \rfloor} \ldots q_t^{\lfloor a_t / d \rfloor}\big)^d,
$$
where $r_i$ is the remainder in the Euclidean division of $a_i$ by $d$, for $i=1, \ldots , t$. Thus, we get $t^d$ integer binary forms 
$$
F_{\underline r} (X, Y) = X^d -  q_1^{r_1} \ldots q_t^{r_t} Y^d, \quad {\underline r} = (r_1, \ldots , r_t), 
$$
of degree $d$, to which \cite[Theorem 2.5]{BuEvGy18} applies. 
Thus, for every nonzero integer $x$ coprime with $q_1 \ldots q_t$ and any 
nonnegative integers $a_1, \ldots , a_t$, we have
$$
[x^d - q_1^{a_1} \ldots q_t^{a_t}]_S \ll^{\eff}_{S,T,d}  |x^d - q_1^{a_1} \ldots q_t^{a_t}|^{1 - \kappa},
$$
where 
\beq \label{Pd}
\kappa = 
\bigl(c^s \bigl(  P (\log p_1) \cdots (\log p_s) \bigr)^{d!} \bigr)^{-1} 
\eeq
and  $c$ is an effectively computable positive number  
depending only on $T$ and $d$. 
The dependence on $P$ is much better in \eqref{P2} than in \eqref{Pd}. 
The reason for this is that we do not use estimates for linear forms in $p_i$-adic logarithms in the 
proof of Theorem \ref{main} (namely, we use estimates for linear forms in $q_j$-adic logarithms), unlike in the 
proof of \cite[Theorem 2.5]{BuEvGy18}.

Furthermore, by arguing as in the proof of \cite[Theorem 2.4]{BuEvGy18}, 
we easily obtain that, 
for every integer $x$ and every positive $\eps$, we have
$$
[x^d - q_1^{a_1} \ldots q_t^{a_t}]_S \ll^{\ineff}_{S,T, \eps} |x^d - q_1^{a_1} \ldots q_t^{a_t}|^{\frac{1}{d} + \eps} . 
$$
Conversely, if $T := \{q_1, \ldots , q_t\}$ is a given set of distinct prime numbers, 
there are finite sets of primes $S$ with 
the smallest prime in $S$ arbitrarily large,
and for each one of these sets $S$ infinitely many $(t+1)$-uples $(x, a_1, \ldots , a_t)$ 
of nonnegative integers such that
$$
[x^d - q_1^{a_1} \ldots q_t^{a_t}]_S \gg_{S,T} |x^d - q_1^{a_1} \ldots q_t^{a_t}|^{\frac{1}{d}}.
$$

Finally, we point out that a result of similar strength than Theorem \ref{main} has been proved in 
\cite{Bu22} for the difference (or the sum) of two integral $T$-units. Namely, \cite[Theorem 3.8]{Bu22} asserts that 
if $x$ and $y$ are coprime integers whose prime divisors are in $T$ and $S$ is a finite set 
of prime numbers disjoint from $T$, then 
$$
[x + y]_S \ll^{\eff}_{S,T} |x+y|^{1 - \tau},
$$
where $\tau$ is an effectively computable positive number depending only on $S$ and $T$. 
By looking closely at the proof, we see that 
$$
\tau  = 
\bigl(c^s (\log \log P )  (\log p_1) \cdots (\log p_s)  \bigr)^{-1},
$$
where $c$ is an effectively computable positive number  
depending only on $T$, and $P$ is the maximum of $p_1, \ldots , p_s$.



\section{Auxiliary results}     \label{sec:2} 

In this section, we recall, in a simplified form, estimates from the theory of linear forms in logarithms. 
As usual, $h(\alpha)$ denotes the (logarithmic) Weil height of the algebraic number~$\alpha$
and we set $h_*(\alpha) = \max\{h(\alpha),1 \}$.

We begin with an immediate consequence of an estimate of Matveev \cite{Mat00}; see also \cite[Theorem 2.2]{Bu18b}.

\begin{theorem}   \label{lflog} 
Let $n \ge 2$ be an integer. 
Let $\alpha_1, \ldots, \alpha_n$ be nonzero algebraic numbers. 
Let $b_1, \ldots , b_n$ be integers with $b_n = \pm 1$.
Let $D$ be the degree over $\Q$ 
of the number field $\Q(\alpha_1, \ldots, \alpha_n)$. 
Set
$$
B = \max\{3, |b_1|, \ldots , |b_n|\}. 
$$ 
If
$$
\alpha_1^{b_1}  \ldots  \alpha_n^{b_n} \not= 1, 
$$ 
then there exists an 
effectively computable positive number $c_1$, depending only on $D$, 
such that 
\begin{equation*}  
\begin{split}
\log | \alpha_1^{b_1}  \ldots  \alpha_n^{b_n} - 1 |    
\ge - c_1^n  & \, h_* (\alpha_1) \ldots h_* (\alpha_n) \\
& \log_* \frac{B \max\{h_* (\alpha_1), \ldots , h_* (\alpha_{n-1}) \}}{h_* (\alpha_n)}.
\end{split}
\end{equation*}
\end{theorem}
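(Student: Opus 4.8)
Theorem \ref{lflog} is, as the paper says, an essentially immediate consequence of Matveev's estimate \cite{Mat00}; in fact it is (very close to) the statement packaged as \cite[Theorem 2.2]{Bu18b}, so the quickest route is simply to quote that. To reprove it from scratch, the plan is as follows. Apply Matveev's theorem, in its sharp form, to $\Lambda := \alpha_1^{b_1}\cdots\alpha_n^{b_n} - 1$ (which is nonzero by hypothesis): it provides a lower bound
\[
\log|\Lambda| \ge -\,C(n,D)\, A_1 \cdots A_n \, \log E ,
\]
valid for any real numbers $A_j$ with $A_j \ge \max\{D\, h(\alpha_j),\, |\log\alpha_j|,\, 1\}$ (a fixed determination of the logarithm being chosen), where $E$ may be taken equal to the weighted height $\max\{1,\ \max_{1\le j\le n}|b_j|A_j/A_n\}$ of the exponent vector, and $C(n,D)$ is effectively computable with $C(n,D)\le c_0^n$ for some $c_0$ depending only on $D$: the factors $30^{n+\mathrm{O}(1)}$, $(n+1)^{\mathrm{O}(1)}$ and $D^2\log(eD)$ occurring in Matveev's constant all absorb into an $n$-th power of a $D$-dependent quantity. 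I may also assume that none of the $\alpha_j$ equals $1$: deleting such factors leaves $\Lambda$ unchanged, and, since $h_*(1)=1$, reinstating them only weakens the inequality to be proved.

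Next I fix $A_j:=\max\{D\,h(\alpha_j),\,|\log\alpha_j|,\,1\}$ and compare it with $h_*(\alpha_j)$. On the one hand $A_j\ge\max\{h(\alpha_j),1\}=h_*(\alpha_j)$; on the other hand the standard estimate $|\log\alpha_j|\le D\,h(\alpha_j)+\pi$ (for the principal branch and any algebraic $\alpha_j$) gives $A_j\le D\,h(\alpha_j)+\pi+1\le(D+\pi+1)\,h_*(\alpha_j)=:c_2(D)\,h_*(\alpha_j)$. Hence $A_1\cdots A_n\le c_2(D)^n\,h_*(\alpha_1)\cdots h_*(\alpha_n)$, while $A_n\ge h_*(\alpha_n)$.

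The one step needing care — and the only place where the hypothesis $b_n=\pm1$ is used — is replacing $\log E$ by the logarithmic factor in the statement. Since $|b_n|=1$, the index $j=n$ contributes $|b_n|A_n/A_n=1$ to the maximum defining $E$, so, with $y:=B\,\max_{1\le j\le n-1}h_*(\alpha_j)/h_*(\alpha_n)$,
\[
E \;\le\; \max\Bigl\{1,\ B\max_{1\le j\le n-1}\frac{A_j}{A_n}\Bigr\} \;\le\; \max\{1,\ c_2(D)\,y\}.
\]
Using $\max\{1,c_2(D)y\}\le\max\{1,c_2(D)\}\cdot\max\{1,y\}$ together with $\log\max\{1,y\}\le\log_* y$ and $\log_* y\ge1$, one gets $\log E\le c_5(D)\,\log_* y$ for a suitable $c_5(D)$ depending only on $D$. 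Substituting this and the bound on $A_1\cdots A_n$ into Matveev's inequality yields
\[
\log|\Lambda| \;\ge\; -\,C(n,D)\,c_2(D)^n\,c_5(D)\; h_*(\alpha_1)\cdots h_*(\alpha_n)\;\log_* y ,
\]
and since $C(n,D)\,c_2(D)^n\,c_5(D)\le c_1^n$ for an effectively computable $c_1$ depending only on $D$, this is exactly Theorem \ref{lflog}. The main — and really the only — obstacle is bookkeeping: one must invoke a formulation of Matveev's theorem sharp enough that the dependence on $b_1,\dots,b_n$ enters through a \emph{weighted} height (so that the coefficient $b_n=\pm1$ of $\log\alpha_n$ genuinely removes $h_*(\alpha_n)$ from the numerator of the logarithmic factor, rather than leaving a plain $\log B$), and one must check that every constant produced along the way is of the form $c_1^n$ with $c_1=c_1(D)$.
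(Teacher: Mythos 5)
The paper offers no proof of this statement: it is quoted as an immediate consequence of Matveev's estimate, with \cite[Theorem 2.2]{Bu18b} as the packaged reference, which is exactly the route you take, and your bookkeeping (the weighted parameter $E=\max\{1,\max_j |b_j|A_j/A_n\}$, the role of $b_n=\pm1$ in cancelling $h_*(\alpha_n)$, and the two-sided comparison $h_*(\alpha_j)\le A_j\le (D+\pi+1)\,h_*(\alpha_j)$) is correct. The only step you gloss over is the standard reduction from $|\alpha_1^{b_1}\cdots\alpha_n^{b_n}-1|$ to the linear form $b_1\log\alpha_1+\cdots+b_n\log\alpha_n+2k\pi i$ (which introduces $\pi i$ as an extra logarithm with a coefficient of size $O(nB)$), but this is already built into the formulation you cite, so the proposal is sound.
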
 

Let $p$ be a prime number. 
For a nonzero rational number $\alpha$, let $\vv_p (\alpha)$ denote the exponent of $p$ in the decomposition of $\alpha$ 
as a product of powers of prime numbers. 
More generally, if $\alpha$ is a nonzero algebraic number in a number field $K$, 
let $\vv_{\pp} (\alpha)$ denote the
exponent of $\pp$ in the decomposition of the
fractional ideal $\alpha O_K$ in a product of prime ideals and set 
$$
\vv_p (\alpha) = {\vv_{\pp} (\alpha) \over e_{\pp}}.
$$
This defines a valuation $\vv_p$ on $K$ which extends 
the $p$-adic valuation $\vv_p$ on $\Q$ normalized in such a way that $\vv_p (p) = 1$. 
We reproduce, in a simplified form, an estimate obtained by Yu \cite{Yu07}; see also \cite[Theorems 2.9 and 2.11]{Bu18b}. 

\begin{theorem} \label{Yu} 
Let $n \ge 2$ be an integer. 
Let $p$ be a prime number and $\alpha_1, \ldots, \alpha_n$ 
algebraic numbers in an algebraic number field of degree $D$.
Let $b_1, \ldots , b_n$ denote rational integers such that $b_n = \pm 1$ and $\alpha_1^{b_1} \ldots \alpha_n^{b_n}$
is not equal to $1$. 
Set
$$
B  =  \max\{3, |b_1|, \ldots , |b_n| \}.   
$$
There exists a positive effectively computable real number $c_2$,  depending only on $D$, such that 
\begin{equation*}  
\begin{split}
\vv_p (\alpha_1^{b_1} \ldots \alpha_n^{b_n} - 1)  < c_2^n \, & p^D \, 
h_* (\alpha_1) \ldots  h_* (\alpha_n) \\
&  \log_* \frac{B \max \{h_* (\alpha_1), \ldots , h_* (\alpha_{n-1}) \} }{h_* (\alpha_n)}.
\end{split}
\end{equation*}
\end{theorem}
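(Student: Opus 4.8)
Theorem~\ref{Yu} is a restatement, in deliberately simplified form, of Yu's lower bound for $p$-adic linear forms in logarithms \cite{Yu07}; see also \cite[Theorems 2.9 and 2.11]{Bu18b}, where it is recorded in essentially this shape. The plan is therefore not to prove it from scratch --- that would mean redeveloping Yu's theory of $p$-adic logarithmic forms --- but to quote Yu's estimate and carry out the same elementary cleanup used to pass from Matveev's theorem to Theorem~\ref{lflog}.

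Concretely, I would proceed in four steps. (i) Yu's estimate is naturally phrased in terms of the order $\ord_\pp(\Lambda)$ of $\Lambda := \alpha_1^{b_1}\cdots\alpha_n^{b_n}-1$ at a prime ideal $\pp$ of $K := \Q(\alpha_1,\ldots,\alpha_n)$ above $p$; since the valuation $\vv_p$ normalised in the text equals $\ord_\pp/e_\pp$, with $e_\pp$ the ramification index of $\pp$, dividing through removes the factor $e_\pp$. (ii) Bounding the residue degree $f_\pp\le D$ replaces the sharp factor $p^{f_\pp}(\log p)^{-2}$ in Yu's bound by the coarser but cleaner $p^{D}$. (iii) One replaces the modified heights occurring in Yu's bound by $h_*(\alpha_j)=\max\{h(\alpha_j),1\}$: each such modified height is bounded by $h_*(\alpha_j)$, up to a factor depending only on $D$, once the $\log p$ in its lower bound has been offset against the negative powers of $\log p$ already supplied by Yu's estimate; all the remaining absolute and $D$-dependent constants are then gathered into a single $c_2^n$. (iv) Using the hypothesis $b_n=\pm 1$ together with $h_*(\alpha_j)\ge\max\{h(\alpha_j),1\}$, one rewrites the coefficient factor in the form $\log_*\bigl(B\max\{h_*(\alpha_1),\ldots,h_*(\alpha_{n-1})\}/h_*(\alpha_n)\bigr)$, exactly as in Theorem~\ref{lflog}.

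There is no genuine obstacle here, since all the transcendence content is Yu's and everything else is routine bookkeeping. The only point that calls for a little care is step (iii): one must check that the powers of $\log p$ implicit in Yu's modified heights are indeed dominated by the negative powers of $\log p$ that Yu's estimate already carries, so that nothing worse than $c_2^n p^D$ remains; it is convenient to treat the prime $p=2$, where $\log p<1$, separately beforehand. Should one prefer not to track these powers by hand, it suffices to invoke the already cleaned-up statements \cite[Theorems 2.9 and 2.11]{Bu18b}, of which Theorem~\ref{Yu} is then simply the combination.
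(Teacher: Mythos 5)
Your proposal matches the paper exactly in spirit: the paper gives no proof of this statement, presenting it as "an estimate obtained by Yu \cite{Yu07}; see also \cite[Theorems 2.9 and 2.11]{Bu18b}," reproduced in simplified form. Your four-step cleanup (normalising the valuation, absorbing the residue-degree and $\log p$ factors, passing to $h_*$, and rewriting the coefficient factor using $b_n=\pm1$) is precisely the routine bookkeeping implicit in that citation, so the proposal is correct and follows the same route.
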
 


We may also refer to \cite[Theorem 3.2.8]{EvGy15}, which merges the statements of 
Theorems \ref{lflog} and \ref{Yu}.

In comparison with other lower bounds for linear forms in logarithms, 
the crucial point in Theorems \ref{lflog} and \ref{Yu} 
is the replacement of the factor 
$\log_* B$ by the factor 
$$
 \log_* \frac{B \max \{h_* (\alpha_1), \ldots , h_* (\alpha_{n-1}) \}  }{ h_* (\alpha_n)}, 
$$
which is much smaller than 
$\log B$ when $h_* (\alpha_n)$ is large. This is precisely the 
situation occurring in Section \ref{sec:3}; see also \cite{Bu23} for more explanations and further 
examples of Diophantine questions where this refined estimate appears to be crucial.

We state now a particular case of a classical lemma (see e.g. 
\cite[Proposition 4.3.12]{EvGy15}).

\begin{lemma}  \label{petitehauteur} 
Let $K$ be a real quadratic number field. Let $\eta$ denote its fundamental unit. 
For every nonzero algebraic integer $\alpha$ in $K$, there exists an integer $m$ such that 
$$
h(\eta^m \alpha) \le \frac{1}{2} \log | \alpha \sigma(\alpha)| + \frac{ \log \eta }{2}, 
$$
where $\sigma$ denotes the Galois embedding of $K$ different from the identity. 
\end{lemma}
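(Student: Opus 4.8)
The plan is to reduce the statement to the explicit shape of the Weil height on a real quadratic field and then make one elementary \emph{balancing} choice of the exponent $m$. Normalise the fundamental unit so that $\eta>1$; its conjugate then satisfies $|\sigma(\eta)|=\eta^{-1}$, since $\eta\,\sigma(\eta)=N_{K/\Q}(\eta)=\pm 1$. First I would record the well-known fact that, for an algebraic integer $\beta$ of $K$, the non-archimedean places do not contribute to the height (because $|\beta|_v\le 1$ there), and that both archimedean places of $K$ are real; hence, identifying $K$ with a subfield of $\R$ via the identity embedding,
$$h(\beta)=\tfrac12\bigl(\log^{+}|\beta|+\log^{+}|\sigma(\beta)|\bigr),\qquad \log^{+}x:=\log\max\{1,x\}.$$
Applying this to $\beta=\eta^{m}\alpha$ and using $|\sigma(\eta^{m}\alpha)|=\eta^{-m}|\sigma(\alpha)|$ gives
$$h(\eta^{m}\alpha)=\tfrac12\bigl(\log^{+}(\eta^{m}|\alpha|)+\log^{+}(\eta^{-m}|\sigma(\alpha)|)\bigr).$$

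The key point is that the product of the two arguments of $\log^{+}$ is $|\alpha\,\sigma(\alpha)|$, the absolute norm of the nonzero algebraic integer $\alpha$, hence a positive rational integer; write $N:=|\alpha\,\sigma(\alpha)|\ge 1$. As $m$ runs through $\Z$, the quantity $\log(\eta^{m}|\alpha|)=m\log\eta+\log|\alpha|$ runs through an arithmetic progression of common difference $\log\eta>0$, so I would choose $m$ (explicitly, $m=\lceil -\log|\alpha|/\log\eta\rceil$) so that
$$0\le u:=\log(\eta^{m}|\alpha|)<\log\eta,$$
and then $\log(\eta^{-m}|\sigma(\alpha)|)=\log N-u$.

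Finally, a short case distinction finishes the argument. If $\log N-u\le 0$, the second $\log^{+}$ vanishes and $h(\eta^{m}\alpha)=\tfrac12u<\tfrac12\log\eta$. If $\log N-u>0$, then $h(\eta^{m}\alpha)=\tfrac12\bigl(u+(\log N-u)\bigr)=\tfrac12\log N$. Since $\log N\ge 0$, in both cases $h(\eta^{m}\alpha)\le\tfrac12\log N+\tfrac12\log\eta=\tfrac12\log|\alpha\,\sigma(\alpha)|+\tfrac{\log\eta}{2}$, which is the asserted bound. I do not expect any genuine obstacle here: the only points requiring care are the normalisation $\eta>1$ and the observation that the finite places drop out of $h(\beta)$ because $\eta^{m}\alpha$ is an algebraic integer; everything else is the elementary balancing of the two archimedean contributions, and the result is in any case a particular case of \cite[Proposition 4.3.12]{EvGy15}.
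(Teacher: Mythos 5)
Your proof is correct and follows essentially the same route as the paper: both arguments use the explicit archimedean formula for the height of an algebraic integer in a real quadratic field, the relation $\sigma(\eta)=\pm\eta^{-1}$, and a balancing choice of the exponent $m$ so that the two conjugates of $\eta^{m}\alpha$ are controlled by $N$ and $\eta$ (the paper centres $|\eta^{m}\alpha|$ near $N^{1/2}$, you centre it in $[1,\eta)$, which is an immaterial difference). No gaps.
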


\begin{proof}
Set $N =  | \alpha \sigma(\alpha)|$.  
Observe that $\sigma (\eta) = \pm \eta^{-1}$. 
There exist an algebraic integer $\delta$ in $K$ and an integer $m$ such that 
$$
\alpha = \delta \eta^m, \quad  \sigma(\alpha) = \pm \sigma(\delta)  \eta^{-m},  
$$
with
$$
N^{1/2}   \eta^{-1/2} < |\delta| \le N^{1/2}  \eta^{1/2}. 
$$
It suffices to observe that we get 
$$
| \sigma(\delta) | = \frac{N}{|\delta|} \le \eta^{1/2} N^{1/2},
$$
and the lemma follows from the definition of the height. 
\end{proof}

The proof of the ineffective part of Theorem \ref{main} rests on the $p$-adic Thue--Siegel--Roth theorem, 
formulated as in \cite[Theorem 6.2.3]{BG06}. In the next theorem, it is understood that, for a rational number $x/y$ and a prime number $p$, the 
expression $|x/y - \infty|_p$ means $|y/x|_p$ (this can be seen by applying a M\"obius transformation, see \cite[6.2.5]{BG06}). 

\begin{theorem}  \label{bgth}
Let $S_1$ be a finite set of finite places of $\Q$. For each $p$ in $S_1$, let $\beta_p$ be algebraic in $\Q_p$ or put $\beta_p = \infty$. 
Let $\beta_{\infty}$ be a real number. Let $\eps > 0$. Then, there are only finitely many rational numbers $x/y$ such that 
$$
\min \Bigl\{1, \Bigl|\frac{x}{y}-\beta_{\infty} \Bigr| \Bigr\} \, \prod_{p\in S_1} \, \min \Bigl\{1, \Bigl|\frac{x}{y}-\beta_{p} \Bigr|_p 
\Bigr\}  \le \max\{|x|, |y|\}^{- 2 - \eps}. 
$$
In particular, if $S_2$ is a subset of $S_1$, then, for any $\eps > 0$, there are only finitely many reduced rational numbers $x/y$ such that 
all the prime divisors of $y$ belong to $S_2$ and 
$$
\min \Bigl\{1, \Bigl|\frac{x}{y}-\beta_{\infty} \Bigr| \Bigr\} \, \prod_{p\in S_1 \setminus S_2} \, \min \Bigl\{1, \Bigl|\frac{x}{y}-\beta_{p} \Bigr|_p 
\Bigr\}  \le \max\{|x|, |y|\}^{- 1 - \eps}. 
$$
\end{theorem}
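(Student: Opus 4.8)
The first displayed inequality is the Ridout ($p$-adic) form of the Thue--Siegel--Roth theorem, and my plan is to quote it from the literature --- it is \cite[Theorem~6.2.3]{BG06} --- rather than to reprove it. For orientation, recall that its proof runs along the classical lines: via Siegel's lemma one constructs an auxiliary polynomial $P(X_1,\dots,X_m)$ in many variables, of prescribed multidegree $(d_1,\dots,d_m)$ and with small coefficients; then, using that a hypothetical infinite family of solutions $x_j/y_j$ is very close to the relevant $\beta_v$ at each place, one shows that $P$ would have large index at $(x_1/y_1,\dots,x_m/y_m)$, provided the heights are taken rapidly increasing and $m$ is large; and finally Roth's lemma (the zero estimate bounding that index from above) produces the contradiction. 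The finite places enter the index and height bookkeeping through the product formula in exactly the same way as the archimedean one does, so the $p$-adic statement costs nothing beyond the classical argument; the single genuinely hard ingredient --- and the main obstacle --- is Roth's lemma.

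To deduce the ``In particular'' assertion from the first inequality I would use the standard device of sending the places of $S_2$ to the point at infinity. Apply the first part with the same $S_1$, keeping $\beta_\infty$ and the $\beta_p$ for $p\in S_1\setminus S_2$ and putting $\beta_p:=\infty$ for each $p\in S_2$. If $x/y$ is reduced and every prime factor of $y$ lies in $S_2$, then $|x|_p=1$ whenever $p\mid y$, so $\min\{1,|x/y-\beta_p|_p\}=\min\{1,|y/x|_p\}=|y|_p$ for $p\in S_2$, and since every prime dividing $y$ lies in $S_2$ the product formula collapses the $S_2$-factors:
\[
\prod_{p\in S_2}\min\Bigl\{1,\Bigl|\frac{x}{y}-\beta_p\Bigr|_p\Bigr\}=\prod_{p\mid y}|y|_p=\frac{1}{|y|}.
\]
Hence the left-hand side of the first inequality equals $|y|^{-1}$ times the left-hand side of the inequality to be proved, so in the regime $\max\{|x|,|y|\}=|y|$ the exponent $-2-\eps$ becomes $-1-\eps$, and the first part (applied with $\eps$ slightly decreased to absorb constants) yields the asserted finiteness. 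The complementary range $|x|>|y|$ is dealt with by an elementary separate argument, splitting according to whether $x/y$ is close to $\beta_\infty$ at the archimedean place, which I omit.
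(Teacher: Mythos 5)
Your proposal follows exactly the paper's route: the first assertion is quoted from \cite[Theorem 6.2.3]{BG06} (the paper does not reprove it either), and the second assertion is deduced by setting $\beta_p=\infty$ for $p\in S_2$. Your computation $\prod_{p\in S_2}\min\{1,|y/x|_p\}=\prod_{p\mid y}|y|_p=1/|y|$ (using that $x/y$ is reduced, so $|x|_p=1$ when $p\mid y$, and that the factors for $p\in S_2$ with $p\nmid y$ equal $1$) is correct and is in fact more explicit than the paper, whose entire proof is the one sentence ``take $\beta_p=\infty$ for every $p\in S_2$.'' You are also right to notice that this only converts the exponent $-2-\eps$ into $-1-\eps$ when $|y|$ is comparable to $\max\{|x|,|y|\}$, a point the paper passes over in silence.

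The one genuine weak spot is your claim that the complementary range $|x|>|y|$ is handled ``by an elementary separate argument.'' It is not elementary in general: when some $\beta_p$ with $p\in S_1\setminus S_2$ is an irrational algebraic element of $\Q_p$ (which is exactly the situation in the application, Proposition 3.1, where the $\beta_p$ are square roots), the statement in that range amounts to ``$\prod_{p\in S_1\setminus S_2}\min\{1,|x/y-\beta_p|_p\}\le |x|^{-1-\eps}$ has finitely many solutions with $y$ an $S_2$-unit and $|x|\gg|y|$,'' and for $y=1$ this already contains the nontrivial assertion that an algebraic irrational in $\Q_p$ cannot be approximated by integers $x$ to order $|x|^{-1-\eps}$. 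The correct way to close it is another application of the Roth--Ridout theorem, this time with the \emph{archimedean} target equal to $\infty$ (permitted in the projective formulation of \cite[Theorem 6.2.3]{BG06}, though not in the first assertion as literally stated with $\beta_\infty$ real): when $|x|>|y|$ one has
$$
\min\Bigl\{1,\Bigl|\frac{x}{y}-\infty\Bigr|\Bigr\}\,\prod_{p\in S_2}\min\Bigl\{1,\Bigl|\frac{x}{y}-\infty\Bigr|_p\Bigr\}=\frac{|y|}{|x|}\cdot\frac{1}{|y|}=\frac{1}{\max\{|x|,|y|\}},
$$
which supplies the missing factor $\max\{|x|,|y|\}^{-1}$ once one also notes that in this range $\min\{1,|x/y-\beta_\infty|\}$ is bounded below. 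So the architecture of your deduction is right and matches the paper, but the omitted case requires the same deep input, not an elementary one.
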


\begin{proof}
The second assertion follows from the first one by taking $\beta_p = \infty$ for every prime number $p$ in $S_2$. 
\end{proof}


\section{Proof of Theorem \ref{main}}   \label{sec:3}

\subsection{Proof of the ineffective estimate}

We argue as in the proof of \cite[Proposition 3.1]{BuEvGy18}.
We denote by $|\cdot |_{\infty}$ the ordinary absolute value,
and by $|\cdot |_p$ the $p$-adic absolute value normalized in such a way that $|p|_p=p^{-1}$ 
for a prime number $p$. 
Further, we set $\Q_{\infty}:=\R$ and $\overline{\Q}_{\infty}:=\C$.
The ineffective part of Theorem \ref{main} follows straightforwardly from the next result. 

\begin{proposition}\label{prop:3.1}
Let $T = \{q_1, \ldots , q_t\}$ be a finite, non-empty set of prime numbers. 
Let $S$ be a finite set of prime numbers, disjoint from $T$. 
Then
\[
\frac{|x^2 - q_1^{a_1} \ldots q_t^{a_t} |}{[x^2 - q_1^{a_1} \ldots q_t^{a_t}]_S}
\gg_{S,T,\varepsilon}^{\ineff} \max \{ x^2, q_1^{a_1} \ldots q_t^{a_t} \}^{\frac{1}{2}-\varepsilon}
\]
for every $\varepsilon >0$ and every positive integer $x$ coprime with $q_1 \ldots q_t$. 
\end{proposition}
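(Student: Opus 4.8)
The plan is to apply the $p$-adic Thue--Siegel--Roth theorem (Theorem \ref{bgth}) to a suitable square root. Write $N := q_1^{a_1} \ldots q_t^{a_t}$ and $D := x^2 - N$, and set $G := [D]_S$, so that $D = G \cdot b$ with $b$ coprime to the primes of $S$; the goal is $|D|/G \gg_{S,T,\eps}^{\ineff} \max\{x^2, N\}^{1/2 - \eps}$. We may assume $|D| < \max\{x^2,N\}^{1/2-\eps}$, for otherwise there is nothing to prove; in particular $|D|$ is small compared with $\max\{x^2,N\}$, so $x^2$ and $N$ have the same order of magnitude and $x \asymp N^{1/2}$. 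First I would split $N = N_0 M^2$ where $N_0 = q_1^{r_1} \ldots q_t^{r_t}$ is squarefree with $r_i = a_i \bmod 2$, so there are only $2^t$ possibilities for $N_0$, and it suffices to treat each one separately. Fixing $N_0$, the equation becomes $x^2 - N_0 M^2 = D$, i.e. $(x - M\sqrt{N_0})(x + M\sqrt{N_0}) = D$ in $K := \Q(\sqrt{N_0})$ (if $N_0 = 1$ the argument is even easier, directly over $\Q$).

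Next, for each prime $p \in S$, the factor $p^{v_p(D)}$ of $G$ forces $x/M$ to be $p$-adically close to $\pm\sqrt{N_0}$: choosing the square root $\beta_p \in \Q_p$ of $N_0$ appropriately (one of $\pm\sqrt{N_0}$, when $N_0$ is a square in $\Q_p$; if $N_0$ is not a $p$-adic square then $v_p(D) = v_p((x-M\sqrt{N_0})(x+M\sqrt{N_0}))$ is controlled trivially since $p$ splits or is inert — one handles this by passing to $K$ and using the places of $K$ above $p$, or simply notes such $p$ contribute boundedly), we get
\[
\Bigl| \frac{x}{M} - \beta_p \Bigr|_p \ll_{S,T} p^{-v_p(D)} \cdot |M|_p^{-1} \cdot (\text{bounded factor}),
\]
using that $|x + M\beta_p|_p$ is bounded below since $x \asymp M\sqrt{N_0}$ and $2M\sqrt{N_0}$ has bounded $p$-adic valuation as $p \notin T$. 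Taking the product over $p \in S$, and noting $\prod_{p \in S} |M|_p^{-1} \le$ (the $S$-part of $M$, which divides $M$, hence $\le M \ll N^{1/2}$), one gets
\[
\prod_{p \in S} \min\Bigl\{1, \Bigl| \frac{x}{M} - \beta_p \Bigr|_p \Bigr\} \ll_{S,T} \frac{1}{\prod_{p\in S} p^{v_p(D)}} \cdot M = \frac{M}{G}.
\]
Combined with the Archimedean estimate $|x/M - \sqrt{N_0}| = |D| / (M |x + M\sqrt{N_0}|) \ll |D|/M^2$, and since $\max\{|x|, |M|\} \asymp M \asymp N^{1/2}$, the product on the left of Theorem \ref{bgth} is $\ll_{S,T} |D|/(M G)$. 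If this were $\le \max\{|x|,|M|\}^{-2-\eps} \asymp N^{-1-\eps/2}$ for infinitely many $(x,M)$, Roth's theorem (applied with $\beta_\infty = \sqrt{N_0}$ and the $\beta_p$, $p \in S$, together with $\beta_p = \infty$ is not needed here) would be contradicted; hence for all but finitely many tuples, $|D|/(MG) \gg_{S,T,\eps}^{\ineff} N^{-1-\eps}$, i.e. $|D|/G \gg_{S,T,\eps}^{\ineff} M N^{-1-\eps} \gg N^{1/2 - \eps}$, absorbing the finitely many exceptions into the ineffective constant. Summing over the $2^t$ choices of $N_0$ finishes the proof.

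The main obstacle I expect is the bookkeeping at primes $p \in S$ for which $N_0$ is \emph{not} a square in $\Q_p$: there one cannot choose $\beta_p \in \Q_p$, and must either work in the quadratic field $K$ (applying the version of Roth over number fields, or restricting to the subsequence where such $v_p(D)$ stays bounded) or argue that such primes contribute only an $O_{S,T}(1)$ factor to $G$ because $v_p(D)$ is then automatically bounded (as $p$ inert forces $v_p(D) = v_p(x^2 - N_0 M^2)$ to be even and small unless $p \mid x, p\mid M$, contradicting reducedness after dividing out). A clean way around this is to replace $x/M$ by a point in $K$ and invoke the number-field $p$-adic Roth theorem directly; since the excerpt only quotes the $\Q$-version, I would instead handle the inert/ramified primes by the elementary valuation argument and reserve Theorem \ref{bgth} for the split primes, which is enough.
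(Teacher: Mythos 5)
Your overall strategy --- write $N=q_1^{a_1}\cdots q_t^{a_t}=N_0M^2$ with $N_0$ squarefree and supported on $T$, factor $x^2-N=(x-M\beta_p)(x+M\beta_p)$ at each place, and feed the resulting approximations of $\pm\sqrt{N_0}$ by $x/M$ into the $p$-adic Thue--Siegel--Roth theorem --- is exactly the paper's, and your treatment of the primes $p\in S$ at which $N_0$ is not a $p$-adic square (they contribute only an $O_{S,T}(1)$ factor and may be dropped) also matches the paper. But there is a decisive gap in the exponent bookkeeping: you invoke only the first assertion of Theorem \ref{bgth}, with exponent $-2-\eps$, and explicitly say that ``$\beta_p=\infty$ is not needed here.'' It is needed --- it is the whole point. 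Note first that $|M|_p=1$ for $p\in S$ (the primes of $M$ lie in $T$, disjoint from $S$), so your product over $S$ is $\ll 1/G$, not $M/G$; the full product is then $\ll |D|/(M^2G)$, and plain Roth gives only $|D|/G\gg M^2\max\{|x|,|M|\}^{-2-\eps}$, which is $\gg M^{-\eps}$ when $x\asymp M$ --- a trivial bound. Your own last line already betrays the problem: $MN^{-1-\eps}$ with $M\asymp N^{1/2}$ is $N^{-1/2-\eps}$, not $N^{1/2-\eps}$. The paper closes the gap by applying the \emph{second} assertion of Theorem \ref{bgth} with $S_2$ equal to the set of primes dividing the denominator $y=M$: since $x/M$ is reduced and $M$ is composed of primes from the fixed set $T$, one may put $\beta_p=\infty$ at those primes and thereby improve the exponent to $-1-\eps$ (Ridout's theorem). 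Only then does the chain close: $|D|/G\gg\max\{x,M\}^{2}\cdot\max\{x,M\}^{-1-\eps}=\max\{x,M\}^{1-\eps}\gg\max\{x^2,N\}^{1/2-\eps}$, up to the bounded factor $N_0$.

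A second, smaller error: the opening reduction ``we may assume $|D|<\max\{x^2,N\}^{1/2-\eps}$, for otherwise there is nothing to prove'' is not valid. The quantity to be bounded below is $|D|/G$, and $G=[D]_S$ can be as large as $|D|$ itself (for instance if $x^2-N=\pm p_1^{k}$), so a large $|D|$ does not make the conclusion automatic; on the contrary, these are precisely among the cases the proposition is meant to rule out. The paper needs no such case split: by keeping $\min\{1,\cdot\}$ at every place, including the Archimedean one, it handles $x\gg M$ and $x\ll M$ uniformly.
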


\begin{proof} 
Set $y = q_1^{\lfloor a_1 / 2 \rfloor} \ldots q_t^{\lfloor a_t / 2 \rfloor}$. 
Our assumption implies that,
for each $p$ in $S\cup\{\infty\}$, there exists $\beta_p$ in $\overline{\Q}_p$ algebraic over $\Q$ such that 
$$
x^2 - q_1^{a_1} \ldots q_t^{a_t} = (x - \beta_p y) (x + \beta_p y).
$$
Then we have
\begin{eqnarray*}
&&\frac{|x^2 - q_1^{a_1} \ldots q_t^{a_t}|}{[x^2 - q_1^{a_1} \ldots q_t^{a_t}]_S\cdot \max \{x, y \}^2}
= 
\frac{ \prod_{p\in S\cup\{ \infty\}} |x^2 - q_1^{a_1} \ldots q_t^{a_t}|_p }{\max \{x, y \}^2}
\\[0.15cm]
&&\quad
\gg_{S,T} 
\prod_{p\in S\cup\{ \infty\}}\min \Bigl\{1, \Bigl|\frac{x}{y}-\beta_{p} \Bigr|_p, 
\Bigl|\frac{x}{y} + \beta_{p} \Bigr|_p\Bigr\}  \\ 
&&\quad
\gg_{S,T} 
\prod_{\substack{p\in S\cup\{ \infty\} \\ \beta_p \in \Q_p}} \min \Bigl\{1, \Bigl|\frac{x}{y}-\beta_{p} \Bigr|_p, 
\Bigl|\frac{x}{y} + \beta_{p} \Bigr|_p\Bigr\}.
\end{eqnarray*}
Since the prime factors of $y$ are in a finite set, the latter 
quantity is $\gg_{S,T,\varepsilon}^{\ineff} \max \{x, y \}^{-1-\varepsilon}$
for every $\varepsilon >0$, by Theorem \ref{bgth}.
\end{proof}

\subsection{Proof of the effective estimate}   \label{ssec:3.1}  

We adapt Schinzel's argument \cite{Schi67} and the proof of 
\cite[Theorem 1.2]{BeBu12}; see also \cite[Theorem 6.3]{Bu18b}. 
Throughout this subsection, all the numerical constants $c_1, c_2, \ldots$ 
and those implicit in $\ll$ and $\gg$ are effective and depend at most on $q_1, \ldots , q_t$. 

Define
$$
Q_1 = \prod_{1 \le i \le t, a_i \equiv 1 \!\!\!\!\! \pmod 2} \, q_i, \quad Q_2 = \sqrt{ \frac{q_1^{a_1} \ldots q_t^{a_t}}{Q_1} },
$$
thus, $q_1^{a_1} \ldots q_t^{a_t} = Q_1 Q_2^2$. Set 
$$
b := \frac{|x^2 - q_1^{a_1} \ldots q_t^{a_t}|}{[x^2 - q_1^{a_1} \ldots q_t^{a_t} ]_S} 
$$
and write
$$
[x^2 - q_1^{a_1} \ldots q_t^{a_t} ]_S = p_1^{2u_1 + v_1} \ldots p_s^{2 u_s + v_s}, \quad v_1, \ldots , v_s \in \{0, 1\}.
$$
Set
$$
A := \max\{|a_1|, \ldots , |a_t|, 2\}, \quad U := \max\{|u_1|, \ldots , |u_s|, 2\}, \quad X := \max\{x, Q_2, 2\}. 
$$
For later use, observe that 
$$
A \ll \log X, \quad U \ll \log X, \quad
|x^2 - Q_1 Q_2^2| \ll X^2. 
$$

Assume first that $Q_1 = 1$. Then, $a_1, \ldots , a_t$ are all even and we get
$$
|x - Q_2| \cdot |x + Q_2| = b p_1^{2u_1 + v_1} \ldots p_s^{2 u_s + v_s}.  
$$
Since the greatest common divisor of $x-Q_2$ and $x+Q_2$ is equal to $1$ or $2$, 
there exist an integer $h$ in $\{0, 1, \ldots , s\}$ (below, an empty product is always equal to $1$) 
and integers $b_1, b_2$ with $b_1 b_2 = 4 b$ such that (we reorder $p_1, \ldots , p_s$ if necessary) 
$$
x - Q_2 =   \frac{b_1}{2} p_1^{2u_1 + v_1} \ldots p_h^{2 u_h + v_h}, \quad
x + Q_2 =  \frac{b_2}{2} p_{h+1}^{2u_{h+1} + v_{h+1}} \ldots p_s^{2 u_s + v_s}.
$$
Consequently, 
\beq  \label{fllog1}
\frac{-2Q_2}{x + Q_2} 
= \frac{b_1}{b_2} p_1^{2u_1 + v_1} \ldots p_h^{2 u_h + v_h} p_{h+1}^{-2u_{h+1} - v_{h+1}} \ldots p_s^{-2 u_s - v_s} - 1. 
\eeq
Observe that 
$$
h \Bigl( \frac{b_1}{b_2} \Bigr)  \ll \log_* b.
$$
If $x > Q_2^2$, then it follows from Theorem \ref{lflog} applied to \eqref{fllog1} that 
\begin{align*}
\log X  & \le c_1^s (\log_* b) \prod_{i=1}^s (\log p_i) \cdot \Bigl( \log \frac{U}{\log_* b} \Bigr)  \\
& \le c_2^s (\log_* b) \prod_{i=1}^s (\log p_i) \cdot \Bigl( \log \frac{\log X}{\log_* b} \Bigr) , 
\end{align*}
and, setting
$$
\cP := (\log p_1) \cdots (\log p_s), \quad P := \max\{p_1, \ldots , p_s\},
$$
we get
$$
\frac{\log X}{\log_* b} \le c_2^s \cP  \Bigl( \log \frac{\log X}{\log_* b} \Bigr),
$$
thus, assuming that $\log X \ge 10 \log_* b$, we get 
$$
\frac{\log X}{\log_* b} \le 2 c_2^s \cP \log ( c_2^s \cP ) \le 2 c_2^s \cP s (\log \log P + \log c_2). 
$$
Consequently, we have established the lower bound 
$$
\max\{b, 3\} \ge X^{c_3^{-s} (\cP \log \log P)^{-1}} \ge |x^2 - Q_2^2|^{c_4^{-s} (\cP \log \log P)^{-1}}. 
$$
If $x \le Q_2^2$, then $A \gg U$ and 
we apply Theorem \ref{Yu} to \eqref{fllog1} to bound $\lfloor a_i / 2 \rfloor = \vv_{q_i} (Q_2)$ and we get
\begin{align*}
A & \le c_5^s (\log_* b) q_i \prod_{i=1}^s (\log p_i) \cdot  \Bigl( \log \frac{U}{\log_* b} \Bigr)   \\
& \le c_6^s (\log_* b) q_i \prod_{i=1}^s (\log p_i) \cdot \Bigl( \log \frac{A}{\log_* b} \Bigr) .
\end{align*}
This gives 
$$
\max\{b, 3\} \ge 2^{A c_7^{-s} (\cP \log \log P)^{-1}}  
\ge X^{c_8^{-s} (\cP \log \log P)^{-1}} \ge |x^2 - Q_2^2|^{c_{9}^{-s} (\cP \log \log P)^{-1}}.
$$

Assume now that $Q_1 \ge 2$.  Let $K$ denote the real quadratic field $\Q (\sqrt{Q_1})$ and $h$ its class number. 
The quadratic number
$$
\alpha = x - \sqrt{Q_1} Q_2 
$$
is an algebraic integer in $K$. 
Let $\pp_1, \ldots , \pp_\ts$ denote the prime ideals in $K$ that divide $p_1, \ldots , p_s$. 
Since at most two prime ideals in $K$ divide $p_i$ for $i=1, \ldots , s$, we have $\ts \le 2 s$. 
There is an ideal $\bb$ in $K$, coprime with $\pp_1 \ldots \pp_\ts$, 
and nonnegative integers $w_1, \ldots , w_\ts$ such that 
$$
(\alpha) = \bb \pp_1^{w_1} \ldots \pp_\ts^{w_\ts}. 
$$ 
For $i=1, \ldots , \ts$, let $z_i$ and $r_i$ denote, respectively, the quotient and the remainder in the Euclidean 
division of $w_i$ by $h$. Then, write
$$
(\alpha) = (\bb \pp_1^{r_1} \ldots \pp_\ts^{r_\ts} ) (\pp_1^h)^{z_1} \ldots (\pp_\ts^h)^{z_\ts}
$$
and observe that all the ideals in this equality are principal. 
Let $\eta$ denote the fundamental unit in $K$. By Lemma \ref{petitehauteur}, 
there is an integer $m$, an algebraic integer $\gamma$ 
in $K$ and, for $i=1, \ldots , \ts$, an algebraic integer $\pi_i$ generating the ideal $(\pp_i^h)$ such that 
\beq   \label{alphaequal}
\alpha = \pm \eta^m \gamma \pi_1^{z_1} \ldots \pi_\ts^{z_\ts}
\eeq
and
$$
h(\gamma) \le c_{10} \log_* b, \quad
h(\pi_i) \le c_{11} \log N \pp_i, \quad i=1, \ldots , \ts,
$$
where $N \pp$ is the norm of the ideal $\pp$. 
We need now to bound $|m|, z_1, \ldots , z_\ts$ from above. 

Since the algebraic integer $\alpha$ is a root of the quadratic polynomial
$$
Z^2 - 2 x Z + x^2 - q_1^{a_1} \ldots q_t^{a_t},
$$
its height $h(\alpha)$ satisfies
$$
h(\alpha)  \ll \log X. 
$$
For $i=1, \ldots , \ts$, the $\pp_i$-adic valuation of $\gamma$ is bounded by the class number 
$h$ and the $\pp_i$-adic valuations of the $\pi_j$'s are zero for $j \not= i$. 
It then follows from the definition of the height that 
$$
U \ll \max\{z_1, \ldots , z_{\ts} \} \ll h(\alpha). 
$$
Then, we deduce from \eqref{alphaequal} that 
$$
|m| \ll h(\eta^m) = h(\alpha \gamma^{-1} \pi_1^{-z_1} \ldots \pi_\ts^{-z_\ts} ) \ll (h_* (\gamma) + \log X). 
$$

Let $\sigma$ denote the complex embedding sending $\sqrt{Q_1}$ to $- \sqrt{Q_1}$. Then,
\beq \label{fllog2}
\Lambda :=
\frac{\alpha - \sigma(\alpha)}{\sigma (\alpha)} = 
\frac{-2 \sqrt{Q_1} Q_2}{x + \sqrt{Q_1} Q_2} 
= \frac{\gamma}{\sigma(\gamma)} \,  \Bigl( \frac{\eta}{\sigma(\eta)} \Bigr)^m \, \prod_{i=1}^\ts \, \Bigl( 
\frac{ \pi_i}{\sigma (\pi_i)} \Bigr)^{z_i} - 1. 
\eeq
If $x > Q_2^2$, then we apply Theorem \ref{lflog} to \eqref{fllog2} to get 
$$
\log X \ll - \log \Lambda   \le c_{12}^{s}  h(\eta) \, h_*(\gamma) 
\, \Bigl( \prod_{i=1}^\ts h_*(\pi_i) \Bigr)  \log \frac{h_* (\gamma) + \log X}{h_*(\gamma)}. 
$$
This gives the lower bound
$$
\max\{b, 3\} \ge X^{c_{13}^{-s} \cP^{-2} (\log \log P)^{-1}} \ge |x^2 - Q_1 Q_2^2|^{c_{14}^{-s}  \cP^{-2} (\log \log P)^{-1} }. 
$$
If $x \le Q_2^2$, then we apply Theorem \ref{Yu} to \eqref{fllog2}  
to bound $\lfloor a_i / 2 \rfloor = \vv_{q_i} (Q_2)$ and we get
$$
a_i \le c_{15}^{s} \, q_i  h(\eta) \, h_*(\gamma) \, \Bigl( \prod_{i=1}^\ts h_* (\pi_i) \Bigr) \log \frac{h_* (\gamma) + \log X}{h_*(\gamma)}. 
$$
Since $\log X \ll A$, this gives the lower bound 
$$
\max\{b, 3\} \ge X^{c_{16}^{-s}  \cP^{-2} (\log \log P)^{-1}} \ge |x^2 - Q_1 Q_2^2|^{c_{17}^{-s}  \cP^{-2} (\log \log P)^{-1}}, 
$$
and the proof of the effective part of Theorem \ref{main} is complete. 

We have established that, if $b=1$, then 
$$
X \le 3^{c_{18}^{s}  \cP^{2} (\log \log P)}. 
$$
By using the Prime Number Theorem and taking for $p_j$ the $j$-th prime number for $j = 1, \ldots , s$, 
we obtain Corollary \ref{gpdiv}. 

We conclude with a remark. Instead of several applications of Lemma \ref{petitehauteur}, we could have applied only once
\cite[Proposition 4.3.12]{EvGy15} to write $\alpha$ as a product of an algebraic number in $K$ of controlled height times an 
$S$-unit (the finite places in $S$ being $\pp_1, \ldots , \pp_\ts$). We would then have obtained \eqref{effecbound} with a 
different expression for $\kappa$ and the estimate 
$$
P[ x^2 - q_1^{a_1} \ldots q_t^{a_t} ] \gg^{\eff}_T \log_* \log X,
$$
which is weaker than our Corollary \ref{gpdiv}. 

\vskip 6mm

\section*{Acknowledgement}
The author is very thankful to the referee for a prompt and detailed report.

\end{document}